%------------------------------------------------------------------------------
% Here please write the date of submission of paper or its revisions:
%------------------------------------------------------------------------------
%
\documentclass[12pt, reqno]{amsart}
\usepackage{amsmath, amsthm, amscd, amsfonts, amssymb, graphicx, color}
\usepackage[bookmarksnumbered, colorlinks, plainpages]{hyperref}

\textheight 22.5truecm 
\textwidth 14.5truecm
\setlength{\oddsidemargin}{0.35in}
\setlength{\evensidemargin}{0.35in}
\setlength{\topmargin}{-.5cm}

\newtheorem{theorem}{Theorem}[section]
\newtheorem{lemma}[theorem]{Lemma}
\newtheorem{proposition}[theorem]{Proposition}

\theoremstyle{definition}
\newtheorem{definition}[theorem]{Definition}
\newtheorem{example}[theorem]{Example}

\theoremstyle{remark}
\newtheorem{remark}[theorem]{Remark}
\numberwithin{equation}{section}

% LOCAL DEFINITIONS %

\newcommand{\N}{{\mathbb{N}}}
\newcommand{\cstar}{\mbox{$C^*$}}
\newcommand{\Um}{\textstyle\mathop{\textrm{Um}}}
\newcommand{\Lg}{\textstyle\mathop{\textrm{Lg}}}
\newcommand{\GL}{\textstyle\mathop{\textrm{GL}}}
\newcommand{\Gen}{\textstyle\mathop{\textrm{Gen}}}
\newcommand{\sr}{\mathop{\textrm{sr}}}
\newcommand{\tsr}{\mathop{\textrm{tsr}}}
\newcommand{\Bsr}{\mathop{\textrm{Bsr}}}
\renewcommand{\span}{\mathop{\textrm{span}}}
\newcommand{\p}[2]{\langle #1,#2\rangle}
\newcommand{\Sum}{\mathop{\mbox{$\sum$}}}
\newcommand{\SUM}{\displaystyle\mathop{\mbox{$\sum$}}}

\begin{document}
\setcounter{page}{1}

\title[The stable rank of $\cstar$-modules]{The stable rank of $\cstar$-modules}

\author[M. Achigar]{Mauricio Achigar}

\address{Facultad de Ciencias, Igu\'{a} 4225, CP 11400, Montevideo, Uruguay.}
\email{\textcolor[rgb]{0.00,0.00,0.84}{achigar@cmat.edu.uy; mauricio.achigar3@gmail.com}}

%\dedicatory{This paper is dedicated to Professor ABCD}

\subjclass[2010]{Primary 46L08; Secondary 46L85, 46L05.}

\keywords{Bass stable rank, topological stable rank, \cstar-algebra, \cstar-module.
}

\date{Received: xxxxxx; Revised: yyyyyy; Accepted: zzzzzz. \newline\indent  Partially supported by Proyecto Fondo Clemente Estable FCE2007\!\_731.
}

\begin{abstract}
We prove equality between the Topological Stable Rank and the Bass Stable Rank for finitely generated projective left modules over a unital \cstar-algebra. In order to do so, the concept of Stable Rank of a Hilbert module is introduced.
\end{abstract} \maketitle

\section{Introduction and preliminaries}

In the mid 1960s, H. Bass introduced the concept of Stable Rank of a ring $A$, now refered to as Bass Stable Rank and denoted by $\Bsr(A)\in\N$. In the late 1970s, R. B. Warfield extends this concept defining the Bass Stable Rank for modules over rings. Later, in \cite{Rie}, M. A. Rieffel introduced the notion of Topological Stable Rank for a Banach algebra $A$, $\tsr(A)\in\N$, as well as for Banach modules over unital Banach algebras. In this work, Rieffel shows that $\Bsr(A)\leq\tsr(A)$ holds for unital Banach Algebras and that $\Bsr(V)\leq\tsr(V)$ holds for finitely generated modules $V$ over unital \cstar-algebras. In \cite{HV}, R. H. Herman and L. N. Vaserstein prove that $\Bsr(A)=\tsr(A)$ for a unital \cstar-algebra $A$.

In this article we show that  $\Bsr(V)=\tsr(V)$  for finitely generated projective modules $V$ over unital \cstar-algebras, using similar techniques to the ones presented in \cite{HV}. In order to generalize Herman-Vaserstein's theorem, we introduce the concept of Stable Rank for a Hilbert module. This definition is inspired by the works of Ara and Goodearl \cite{AG} and Blackadar \cite{Bla}.

\section{Types of stable ranks}

\begin{definition}
Let $_AV$ be a left module over a ring $A$. The set of $n$-generators of $V$ is defined as
   $$\Gen_n(V)=\{(x_1,\ldots,x_n)\in V^n: A\cdot x_1+\cdots+A\cdot x_n=V\}.$$
When $n=1$ we simply write $\Gen(V)=\Gen_1(V)$. We say that an $(n+1)$-generator $(x_1,\ldots,x_n,y)\in\Gen_{n+1}(V)$ is {\it reducible} if there exist $a_1,\ldots,a_n\in A$ such that $(x_1+a_1\cdot y,\ldots,x_n+a_n\cdot y)\in\Gen_{n}(V)$.
\end{definition}

\begin{remark}
The column space $V^n$ can be viewed as a left module over the matrix ring $ M_n(A)$. In that case we have
   $$\Gen_n(_AV)=\Gen(_{M_n(A)}V^n).$$
\end{remark}

\begin{definition}
Let $_AV$ be a left module over a ring $A$. The {\it Bass Stable Rank} of $V$, denoted $\Bsr(V)$, is defined as the least $n\in\N$ ($n\geq 1$) such that every $(n+1)$-generator $(x_1,\ldots,x_n,y)\in\Gen_{n+1}(V)$ is reducible.
\end{definition}

When $V={_A}A$ and $A$ has a unit, this definition becomes definition 2.1 in \cite{Rie} and it's due to Bass. For arbitrary $V$ and unital $A$, the above definition is equivalent to definition \cite[Definition 9.1]{Rie} of Bass stable rank for modules, introduced by Warfield.

\begin{definition} (\cite[Definition 9.3]{Rie}) Let $_AV$ be a left Banach module over a Banach algebra $A$. The {\it Topological Stable Rank} of $V$ is defined as
   $$\tsr(V)=\min\{n\in\N:\Gen_n(V)\text{ is dense in }V^n\}.$$	
\end{definition}

\section{Stable rank for Hilbert modules}

\begin{definition}
Given a right Hilbert module $X_B$ over a unital $C^*$-algebra $B$, we consider
   $$\Um_n(X)=\{(x_1,\ldots,x_n)\in X^n: \textstyle\sum_k\p{x_k}{x_k}\in\GL(B)\}.$$
An $n$-tuple in $\Um_n(X)$ is called {\it unimodular} tuple. If $n=1$ we write $\Um(X)=\Um_1(X)$.
\end{definition}

\begin{remark}\label{UmXn} The column space $X^n$ can be viewed as a right $C^*$-module over $B$, and in this case we have
   $$\Um_n(X_B)=\Um(X_B^n).$$
\end{remark}

\begin{definition}
Let $X_B$ be a right Hilbert module over a unital $C^*$-algebra $B$. We define the {\it Stable Rank} of $X_B$ as
 $$\sr(X_B)=\min\{n\in\N:\Um_n(X)\text{ is dense in }X^n\}.$$
\end{definition}

Note that if $k\geq\sr(X_B)$ then $\Um_k(X)$ is dense in $X^k$.

\begin{remark}
For a unital \cstar-algebra $B$, taking $X_B=B_B$ in the previous definition we recover Rieffel's definition of (left) topological stable rank of $B$  \cite[Definition 1.4]{Rie}. Indeed, lemma \ref{lema1} or remark \ref{rk1} can be used to see that  $\Um_n(B_B)=\Lg_n(B)$, the later being the set of left $n$-generators of $B$ considered in \cite{Rie}. Then, $\sr(B_B)=\tsr(B)$ ($=\Bsr(B)$).
\end{remark}

\begin{remark}
For projections $p,q$ in a \cstar-algebra $A$, Blackadar (\cite{Bla}) considered the set
   $$\Lg_{(p,q)}(A)=\{x\in pAq:{\exists}\;y\in qAp\text{ such that }yx=q\}.$$
and used the condition of $\Lg_{(p,q)}(A)$ being dense in $pAq$.
Taking $X$ as the skew corner $X=pAq$ and $B=qAq$ we have, by lemma \ref{lema1}, that $\Lg_{(p,q)}(A)=\Um(X_B)$, and $\Lg_{(p,q)}(A)$ is dense in $pAq$ if and only if $\sr(X_B)=1$.
\end{remark}

\begin{example} Let $A$ be a unital \cstar-algebra and consider the set $M_{n\times m}(A)$ as a right \cstar-module over $M_m(A)$ with formal matrix operations. The stable rank of $M_{n\times m}(A)$ is
\begin{equation}\label{srMnm}
  \sr(M_{n\times m}(A))
  =\left\lceil\frac{\sr(A)+m-1}n\right\rceil.
\end{equation}
This expression extends the well-known formula for $\sr(M_n(A))$ (\cite[theorem 6.1]{Rie}).
\end{example}

\begin{proof} Firstly, by lemma \ref{lema1} we know that  $\Um(M_{n\times m}(A))$ is the set of left invertible $n\times m$ matrices over $A$. Then, by \cite[Corollary 4.3]{Bla} we have
\begin{equation*}
  \Um(M_{(n+1)\times 1}(A))
  \textit{ is dense}\quad\textit{iff}\quad
  \Um(M_{(n+k)\times k}(A))\textit{ is dense},
\end{equation*}
where ``$\Um(M_{r\times s}(A))$ dense'' means dense  in $M_{r\times s}(A)$. Equivalently,
\begin{equation}\label{bla4.3}
  \Um(M_{r\times s}(A))
  \textit{ is dense}\quad\textit{iff}\quad
  \Um(M_{(r-s+1)\times 1}(A))\textit{ is dense},
  \quad\textit{for }r\geq s.
\end{equation}

Secondly, we can realize $M_{n\times m}(A)$ as a skew corner of $M_{\bar n}(A)$ for $\bar n$ large in the following way: $M_{n\times m}(A)\cong pM_{\bar n} (A)q$ for $p,q\in M_{\bar n} (A)$ diagonal projections of ranks $n$ and $m$, respectively. Then, by \cite[Proposition 3.2.iii]{Bla} we have
\begin{equation}\label{bla3.2}
  \textit{If }\Um(M_{n\times m}(A))
  \textit{ is dense then }n\geq m.
\end{equation}

For $k\in\N$, we have $k\geq\sr(M_{n\times m}(A))$ iff $\Um_k(M_{n\times m}(A))$ is dense. Identifying the column space $M_{n\times m}(A)^k$ with $M_{nk\times m}(A)$ we have
$\Um_k(M_{n\times m}(A))
=\Um(M_{n\times m}(A)^k)
=\Um(M_{nk\times m}(A))$.
If $\Um(M_{nk\times m}(A))$ is dense then $nk\geq m$, by \eqref{bla3.2}, and $\Um(M_{(nk-m+1)\times1}(A))$ is dense, by \eqref{bla4.3}.  Therefore $nk-m+1\geq\sr(A)$ by definition of $\sr (A)$, and then $k\geq\frac{\sr(A)+m-1}n$. Conversely, the inequalities imply $nk\geq m$ and the density of  $\Um(M_{(nk-m+1)\times1}(A))$, then by \eqref{bla4.3} $\Um(M_{nk\times m}(A))$ $(=\Um_k(M_{n\times m}(A)))$ is dense, and finally $k\geq\sr(M_{n\times m}(A))$. Thus we have shown that $k\geq\sr(M_{n\times m}(A))$ {\it iff} $k\geq\frac{\sr(A)+m-1}n$, which is equivalent to \eqref{srMnm}.
\end{proof}

\begin{lemma}\label{lema1}
Let $X_B$ be a unital Hilbert module. For $x\in X$, the following  are equivalent:
\begin{enumerate}
\item[(a)] $\p xx\in\GL(B)$.
\item[(b)] There exists $y\in X$ such that $\p yx=1$.
\item[(b*)] There exists $y\in X$ such that $\p xy=1$.
\item[(c)] There exists $y\in X$ such that $\p yx\in\GL(B)$.
\item[(c*)] There exists $y\in X$ such that $\p yx\in\GL(B)$.
\end{enumerate}
\end{lemma}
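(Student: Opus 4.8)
The plan is to prove the cycle of implications (a) $\Rightarrow$ (b) $\Rightarrow$ (c) $\Rightarrow$ (a), after which all five conditions become equivalent once we observe that the starred versions are just the adjoint forms of the unstarred ones. Indeed, since $\p yx^*=\p xy$ in any Hilbert module, the equality $\p yx=1$ holds if and only if $\p xy=1$ (because $1^*=1$), so (b) and (b*) are witnessed by the same $y$; likewise $\p yx\in\GL(B)$ holds if and only if $\p xy\in\GL(B)$, since the adjoint of an invertible element is invertible, so (c) and (c*) are also witnessed by the same $y$. Hence it suffices to treat the unstarred chain, the starred equivalences being formal consequences of the adjoint identity.

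For (a) $\Rightarrow$ (b) I would exhibit an explicit witness. Assuming $\p xx\in\GL(B)$, I set $y=x\cdot\p xx^{-1}$. Using that $\p xx$ is positive, hence self-adjoint (so that its inverse is self-adjoint as well), together with the module identity $\p{x\cdot b}{x}=b^*\p xx$, a direct computation gives $\p yx=\p xx^{-1}\p xx=1$. The implication (b) $\Rightarrow$ (c) is immediate, since $1\in\GL(B)$.

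The substance of the lemma lies in (c) $\Rightarrow$ (a), and the key tool here is the Cauchy--Schwarz inequality for Hilbert $C^*$-modules, $\p xy\p yx\leq\|\p yy\|\,\p xx$. Suppose $c:=\p yx\in\GL(B)$. Then $\p xy\p yx=c^*c$ is a positive and invertible element of $B$, so it is bounded below: $c^*c\geq\delta\cdot 1$ with $\delta=\|(c^*c)^{-1}\|^{-1}>0$. Moreover $y\neq 0$ (otherwise $\p yx=0$ would not be invertible), so $\|\p yy\|>0$. Combining the lower bound with Cauchy--Schwarz yields
\[
  \delta\cdot 1\leq c^*c\leq\|\p yy\|\,\p xx,
\]
whence $\p xx\geq\frac{\delta}{\|\p yy\|}\cdot 1>0$. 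A positive element of a unital $C^*$-algebra that dominates a strictly positive multiple of the unit has spectrum bounded away from $0$, hence is invertible; therefore $\p xx\in\GL(B)$, which closes the cycle.

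I expect the main obstacle to be exactly the direction (c) $\Rightarrow$ (a): invoking Cauchy--Schwarz in the correct orientation (the form placing $\p xx$ on the dominating side) and then converting the scalar lower bound on the positive element $c^*c$ into genuine invertibility of $\p xx$. The remaining steps are either the explicit construction of a witness or formal manipulations with the adjoint identity $\p yx^*=\p xy$.
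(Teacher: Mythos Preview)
Your proof is correct and follows essentially the same approach as the paper: both use the explicit witness $y=x\cdot\p xx^{-1}$ for (a) $\Rightarrow$ (b) and the Cauchy--Schwarz inequality to bound $\p xx$ from below for the reverse direction. The paper is terser---it only proves (a) $\Leftrightarrow$ (b), applying Cauchy--Schwarz in the special case $\p yx=1$ and leaving the reductions to (b*), (c), (c*) implicit---whereas you spell out the adjoint symmetry and route the argument through the more general (c) $\Rightarrow$ (a), but the substance is the same.
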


\begin{proof}
It suffices to show that (a) is equivalent to (b). If $\p xx\in\GL(B)$ let $b=\p xx$ and $y=x(b^{-1})^*$. Then $\p yx=b^{-1}\p xx=1$. On the other hand, if $\p yx=1$ we have $1=\p yx^*\p yx\leq\|y\|^2\p xx$, so that $\p xx\in\GL(B)$.
\end{proof}

\begin{remark}\label{rk1} Applying the previous lemma to $X^n$  and using \ref{UmXn} we obtain different expressions for $\Um_n(X)$. For example, using (b) we have
  $$\Um_n(X)=\{(x_1,\ldots,x_n)\in X^n: \exists\ y_1,\ldots,y_n\in X\
  \slash\ \textstyle\sum_k\p{y_k}{x_k}=1\}.$$
\end{remark}

\begin{lemma}\label{full}
Let $X_B$ be a unital Hilbert module. Then $X_B$ is full if and only if there exists $n\in\N$ such that $\Um_n(X)\neq\varnothing$.
\end{lemma}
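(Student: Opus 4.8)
The plan is to prove the two implications separately; the reverse direction is immediate, while the forward direction is the substantive one and will rest on the Cauchy--Schwarz inequality for Hilbert modules. Throughout I write $\langle X,X\rangle=\overline{\span}\{\p xy:x,y\in X\}$ for the range of the inner product, and I recall that $X_B$ is full precisely when $\langle X,X\rangle=B$.

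For the implication $\Um_n(X)\neq\varnothing\Rightarrow$ full, I would take $(x_1,\ldots,x_n)\in\Um_n(X)$ and set $b=\sum_k\p{x_k}{x_k}\in\GL(B)$. Each summand lies in $\langle X,X\rangle$, so $b$ does as well. Since $\p xy\,c=\p x{yc}$ and $c\,\p xy=\p{xc^*}y$, the range $\langle X,X\rangle$ is a closed two-sided ideal of $B$; containing the invertible element $b$, it is forced to be all of $B$. Concretely, $c=cb^{-1}b=\sum_k\p{x_k(cb^{-1})^*}{x_k}\in\langle X,X\rangle$ for every $c\in B$, so $X_B$ is full.

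For the converse I would argue as follows. As $B$ is unital and $X_B$ is full, $1\in B=\langle X,X\rangle$, so I can choose $u_1,\ldots,u_m,v_1,\ldots,v_m\in X$ with $\|1-a\|<1$, where $a=\sum_{j=1}^m\p{u_j}{v_j}$; this already forces $a\in\GL(B)$. Viewing $U=(u_1,\ldots,u_m)$ and $V=(v_1,\ldots,v_m)$ as elements of the Hilbert module $X^m$, one has $a=\p UV$ and $a^*a=\p{U}{V}^*\p{U}{V}$, and Cauchy--Schwarz in $X^m$ yields $a^*a\leq\|\p{U}{U}\|\,\p{V}{V}$. Because $a$ is invertible, $a^*a$ is positive and invertible, hence $a^*a\geq\varepsilon\,1$ with $\varepsilon=\|(a^*a)^{-1}\|^{-1}>0$. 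Combining the two estimates gives $\p{V}{V}\geq(\varepsilon/\|\p{U}{U}\|)\,1>0$, whence $\sum_{j=1}^m\p{v_j}{v_j}=\p{V}{V}\in\GL(B)$. Thus $(v_1,\ldots,v_m)\in\Um_m(X)$, and in particular $\Um_m(X)\neq\varnothing$.

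The only real obstacle is the step in the converse that upgrades the off-diagonal approximant $a=\sum_j\p{u_j}{v_j}$ of $1$ --- which is in general neither self-adjoint nor of the required diagonal shape $\sum_k\p{x_k}{x_k}$ --- into a genuine invertible diagonal element. The Cauchy--Schwarz bound $a^*a\leq\|\p{U}{U}\|\,\p{V}{V}$ is exactly what transfers a lower bound on $a^*a$ to a lower bound on $\p{V}{V}$; the point to watch is the orientation of the inequality, so that invertibility of $a$ pushes \emph{up} to invertibility of $\p{V}{V}$ rather than the reverse.
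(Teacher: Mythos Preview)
Your argument is correct. The reverse implication is fine as written, and in the forward direction the Cauchy--Schwarz step $\p UV^*\p UV\leq\|\p UU\|\,\p VV$ does exactly what you claim, pushing the lower bound on $a^*a$ up to a lower bound on $\p VV$ (note $\|\p UU\|>0$ since $a\neq0$).

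The paper's route is only superficially different. Rather than stopping at $a=\sum_j\p{u_j}{v_j}\in\GL(B)$ and applying Cauchy--Schwarz, the paper uses that $J=\span\p XX$ is a two-sided ideal: once $J$ meets $\GL(B)$ it must contain $1$, so one can write $1=\sum_k\p{y_k}{x_k}$ exactly, and then invoke the equivalence (a)$\Leftrightarrow$(b) of Lemma~\ref{lema1} (via Remark~\ref{rk1}) to conclude $(x_1,\ldots,x_n)\in\Um_n(X)$. Your Cauchy--Schwarz estimate is precisely the proof of (b)$\Rightarrow$(a) in Lemma~\ref{lema1}, applied to a slightly more general hypothesis ($\p UV$ invertible rather than $\p UV=1$, i.e.\ condition~(c) rather than~(b)). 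So the two proofs share the same engine; the paper just factors it through the already-proved lemma, which makes the chain of equivalences a one-liner, while your version is self-contained.
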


\begin{proof}
The module $X_B$ is full if and only if the $*$-ideal $J=\span(\p XX)$ is dense in $B$, iff $J\cap\GL(B)\neq\varnothing$, iff $1\in J$ ($=B$), iff there exists $n\in\N$ and
$x_1,\ldots,x_n,y_1,\ldots,y_n\in X$ such that $\sum_k\p{x_k}{y_k}=1$, iff there exists $n\in\N$ and $x,y\in X^n$ such that $\p xy=1$, iff there exists $n\in\N$ such that $\Um_n(X)\neq\varnothing$.
\end{proof}

Notice that if $X_B$ is not full then $\Um_n(X)=\varnothing$ for all $n\in\N$, and therefore $\sr(X)=\infty$. Thus, throughout this paper we shall consider $X_{B}$ to be full.

\begin{definition}
 A {\it\cstar-correspondence} is a right Hilbert module $X_B$ equipped with a left action $A\to\mathcal{L}(X_B)$ of a \cstar-algebra $A$ by adjointable operators. When $B$ has a unit we say that the correspondence is {\it right-unital}.
\end{definition}

\begin{lemma} Let $_AX_B$ be a full and right-unital $C^*$-correspondence. Then
   $$\Gen_n(_AX)\subseteq\Um_n(X_B)\ \ \forall\, n\in\N.$$
\end{lemma}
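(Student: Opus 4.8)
The plan is to reduce the inclusion to producing, from a given generator, a suitable ``dual'' tuple, and then to invoke Lemma~\ref{lema1}. By Remark~\ref{UmXn} we have $\Um_n(X_B)=\Um(X_B^n)$, and applying Lemma~\ref{lema1} to the single element $x=(x_1,\dots,x_n)\in X^n$ via condition (b*) shows that $(x_1,\dots,x_n)\in\Um_n(X)$ as soon as we can find $w_1,\dots,w_n\in X$ with $\sum_k\p{x_k}{w_k}=1$ (the symmetric version of the formula recorded in Remark~\ref{rk1}, since $\p{x}{w}=1$ forces $\p xx\in\GL(B)$). Thus the whole problem reduces to: given $(x_1,\dots,x_n)\in\Gen_n({}_AX)$, manufacture such a dual tuple.

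First I would feed in fullness. Since $X_B$ is full, the iff-chain in the proof of Lemma~\ref{full} (namely $1\in\span(\p XX)$) supplies elements $u_1,\dots,u_m,v_1,\dots,v_m\in X$ with $\sum_j\p{u_j}{v_j}=1$. Next I would invoke the generation hypothesis: because $A\cdot x_1+\cdots+A\cdot x_n=X$ as left $A$-modules, each $u_j$ is a finite combination $u_j=\sum_k a_{jk}\cdot x_k$ with $a_{jk}\in A$. This is an exact equality, not a limit, because $\Gen_n$ is defined by algebraic generation, so no approximation step is needed.

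The key step is then to transport the coefficients $a_{jk}$ across the inner product using adjointability of the left action. Since the correspondence structure makes $A\to\mathcal{L}(X_B)$ a $*$-homomorphism, we have $\p{a\cdot x}{y}=\p{x}{a^*\cdot y}$ for all $a\in A$ and $x,y\in X$. Substituting $u_j=\sum_k a_{jk}\cdot x_k$ into $\sum_j\p{u_j}{v_j}=1$ and collecting the sum over $j$ into the right-hand slot yields
$$1=\sum_j\p{\textstyle\sum_k a_{jk}\cdot x_k}{v_j}=\sum_k\p{x_k}{\textstyle\sum_j a_{jk}^*\cdot v_j}=\sum_k\p{x_k}{w_k},\quad\text{where } w_k:=\textstyle\sum_j a_{jk}^*\cdot v_j\in X.$$
This is exactly the dual tuple sought above, so $(x_1,\dots,x_n)\in\Um_n(X)$, which gives the desired inclusion.

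I expect the only genuine content to be the adjointability identity $\p{a\cdot x}{y}=\p{x}{a^*\cdot y}$, which is where the \cstar-correspondence hypothesis (a $*$-homomorphic left action by adjointable operators) is used essentially; the remainder is bookkeeping with finite sums. An equivalent route is to reduce to $n=1$ first, via $\Gen_n({}_AX)=\Gen({}_{M_n(A)}X^n)$ together with $\Um_n(X_B)=\Um(X_B^n)$, after checking that ${}_{M_n(A)}X^n{}_B$ is again full and right-unital, and then run the same argument for a single generator. I would favor the direct computation above, however, since it avoids re-verifying the correspondence axioms for the amplification.
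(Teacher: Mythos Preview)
Your proof is correct and follows essentially the same idea as the paper's: use fullness to get a relation $\sum_j\p{u_j}{v_j}=1$, express the $u_j$'s in terms of the generators, and move the coefficients across the inner product via adjointability of the left action. The only organizational difference is that the paper first reduces to $n=1$ via $\Gen_n({}_AX)=\Gen({}_{M_n(A)}X^n)$ and $\Um_n(X_B)=\Um(X_B^n)$ and then runs exactly your computation with a single generator, whereas you handle general $n$ directly with double indices and mention the reduction as an alternative; the content is the same either way.
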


\begin{proof}
Since $\Gen_n(_AX)=\Gen(_{M_n(A)}X^n)$, $\Um_n(X_B)=\Um(X_B^n)$, and the $C^*$-correspondence $_{M_n(A)}X^n_B$ is full and right-unital, we may assume $n=1$. As $X_B$ is full there exist $x_1,\ldots,x_r,y_1,\ldots,y_r\in X$ such that $\sum_k\p{x_k}{y_k}=1$. If $x\in\Gen(_AX)$ let $a_1,\ldots,a_r\in A$ be such that $x_k=a_k\cdot x$, for $k=1,\ldots,r$. Then,
   $$\textstyle1=\sum_k\p{x_k}{y_k}
     =\sum_k\p{a_k\cdot x}{y_k}
     =\sum_k\p{x}{a_k^*\cdot y_k}
     =\p{x}{\sum_ka_k^*\cdot y_k},$$
so $x\in\Um(X_B)$.
\end{proof}

\begin{definition}
A vector space $X$ is said to be a {\it\cstar-bimodule} whenever it is equipped with compatible left and right Hilbert module structures over \cstar-algebras $A$ and $B$, respectively.
\end{definition}

\begin{lemma}
Let $_AX_B$ be a right-full and right-unital $C^*$-bimodule. Then
   $$\Um_n(X_B)\subseteq\Gen_n(_AX)\ \ \forall\, n\in\N.$$
\end{lemma}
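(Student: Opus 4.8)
The plan is to mimic the reduction used in the preceding lemma and then transport the defining identity of a unimodular tuple across the two inner products of the bimodule. First I would reduce to the case $n=1$: by Remark~\ref{UmXn} we have $\Um_n(X_B)=\Um(X_B^n)$ and $\Gen_n(_AX)=\Gen(_{M_n(A)}X^n)$, and the column bimodule $_{M_n(A)}(X^n)_B$ is again right-full and right-unital, so it is enough to prove $\Um(X_B)\subseteq\Gen(_AX)$. (Alternatively one can run the argument directly for all $n$ at once, using the description of $\Um_n(X)$ from Remark~\ref{rk1}.)

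For the base case, let $x\in\Um(X_B)$, so that $\p{x}{x}\in\GL(B)$. Lemma~\ref{lema1}, equivalence of (a) and (b), produces $y\in X$ with $\p{y}{x}=1$. Denoting by ${}_A\langle\cdot,\cdot\rangle$ the left $A$-valued inner product of the bimodule and using the compatibility identity ${}_A\langle u,v\rangle\cdot w=u\cdot\p{v}{w}$ together with right-unitality, I would then compute, for an arbitrary $z\in X$,
$$z=z\cdot 1=z\cdot\p{y}{x}={}_A\langle z,y\rangle\cdot x\in A\cdot x,$$
whence $X=A\cdot x$ and $x\in\Gen(_AX)$. This absorbs the scalar value $\p{y}{x}=1$ into the generator $x$, which is exactly what the two compatible structures allow.

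I expect the only delicate point to be the justification of the middle equality, that is, making explicit that the notion of $C^*$-bimodule carries the imprimitivity-type relation ${}_A\langle u,v\rangle\cdot w=u\cdot\p{v}{w}$ linking the left action, the left inner product and the right inner product; everything else (the matrix reduction and the final computation) is formal. Right-unitality is what permits writing $z=z\cdot1$ and what makes Lemma~\ref{lema1} applicable, while right-fullness only serves to keep the statement non-vacuous, since by Lemma~\ref{full} a non-right-full $X$ has $\Um_n(X)=\varnothing$ and the inclusion then holds trivially.
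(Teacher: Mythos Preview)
Your proof is correct and is essentially identical to the paper's: both reduce to $n=1$ via the column bimodule, pick $y$ with $\p{y}{x}_R=1$ using Lemma~\ref{lema1}, and then compute $z=z\cdot\p{y}{x}_R={}_A\langle z,y\rangle\cdot x$ via the bimodule compatibility identity. Your explicit identification of the imprimitivity relation as the one nontrivial ingredient is exactly the point the paper's notion of ``compatible'' structures is meant to encode.
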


\begin{proof}
Since $\Gen_n(_AX)=\Gen(_{M_n(A)}X^n)$, $\Um_n(X_B)=\Um(X_B^n)$ and the $C^*$-bimodule $_{M_n(A)}X^n_B$ is right-full and right-unital, we may assume $n=1$. Given $x\in\Um(X)$, let $y\in X$ be such that $\p yx_R=1$. Then, for all $z\in X$ we have $z=z\cdot1=z\cdot\p yx_R=\p zy_L\cdot x$. Then $x\in\Gen(X)$.
\end{proof}

\begin{proposition}
Let $_AX_B$ be a right-full and right-unital $C^*$-bimodule. Then
   $$\Um_n(X_B)=\Gen_n(_AX)\ \ \forall\, n\in\N\quad\text{and}\quad \tsr(_AX)=\sr(X_B).$$
\end{proposition}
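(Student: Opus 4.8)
The plan is to obtain the set-level equality by combining the two preceding inclusion lemmas, and then to read off the equality of ranks from the fact that both numbers are defined as the least $n$ making the \emph{same} subset of $X^n$ dense for the \emph{same} topology.

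First I would verify that a right-full and right-unital $C^*$-bimodule meets the hypotheses of both inclusion lemmas. Right-fullness means the right inner product spans a dense ideal of $B$, which is exactly fullness of the right Hilbert module $X_B$; together with right-unitality this exhibits $_AX_B$ as a full and right-unital $C^*$-correspondence, so the first lemma gives $\Gen_n(_AX)\subseteq\Um_n(X_B)$. The hypotheses of the second lemma are our standing assumptions verbatim, so it supplies the reverse inclusion $\Um_n(X_B)\subseteq\Gen_n(_AX)$. Combining the two yields $\Um_n(X_B)=\Gen_n(_AX)$ for every $n\in\N$.

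For the equality of ranks, recall that $\tsr(_AX)=\min\{n:\Gen_n(_AX)\text{ is dense in }X^n\}$ while $\sr(X_B)=\min\{n:\Um_n(X)\text{ is dense in }X^n\}$. Since the two families of subsets coincide term by term, it suffices to know that the phrase ``dense in $X^n$'' refers to one and the same topology in both definitions. Here the compatibility built into the definition of a $C^*$-bimodule is what does the work: it forces the norm $\|x\|=\|\p xx_L\|^{1/2}$ coming from the left $A$-valued inner product and the norm $\|x\|=\|\p xx_R\|^{1/2}$ coming from the right $B$-valued inner product to induce the same topology on $X$, hence on each $X^n$. Granting this, $\Gen_n(_AX)$ is dense precisely when $\Um_n(X)$ is dense, so the two minima agree and $\tsr(_AX)=\sr(X_B)$.

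The bookkeeping identifying the hypotheses of the two lemmas is routine; the one point that genuinely requires care---and the step I expect to be the main obstacle---is justifying that the left and right Hilbert module norms determine a single topology on $X$. This is the familiar coincidence of norms in an imprimitivity-type bimodule, which flows from the compatibility relation $\p xy_L\cdot z=x\cdot\p yz_R$ linking the two inner products; I would record the resulting norm equality explicitly before invoking it, since without it the two density conditions would a priori be taken in different topologies.
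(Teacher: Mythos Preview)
Your proposal is correct and follows exactly the approach the paper intends: the proposition is stated in the paper without any proof, as an immediate corollary of the two inclusion lemmas just above it. Your extra care about checking that the left and right Hilbert-module norms induce the same topology on $X$ is a point the paper leaves implicit in the word ``compatible'' in its definition of $C^*$-bimodule; it is good that you flag it, but the paper treats it as part of the standing hypotheses rather than something to be verified.
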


\section{Stable rank Inequality for $C^*$-modules}

\cite[Proposition 9.7]{Rie} says that $\Bsr(V)\leq\tsr(V)$ for a finitely generated projective left module $_AV$ over a unital $C^*$-algebra $A$. Inspired by this, we prove a similar result in the $C^*$-module context, namely, \emph{if $X$ is a right-full and right-unital $C^*$-bimodule, then $\Bsr(X)\leq\tsr(X)$.}

\begin{lemma} (Warfield Condition, \cite[Propositions 2.2 and 9.2]{Rie}) Let $_AX_B$ be a right-full and right-unital $C^*$-bimodule and $x_1,\ldots,x_{n+1}\in X$. The following conditions are equivalent:
\begin{enumerate}
\item[(a)] $\exists\ a_1,\ldots,a_n\in A$ $/$ $(x_1+a_1\cdot x_{n+1},\ldots,x_n+a_n\cdot x_{n+1})\in\Um_n(X)$.
\item[(b)] $\exists\ y_1,\ldots,y_{n+1}\in X$ $/$ $\SUM_{k=1}^{n+1}\p {y_k}{x_k}_R=1$ and $(y_1,\ldots,y_n)\in\Um_n(X)$.
\end{enumerate}
\end{lemma}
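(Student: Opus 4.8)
The plan is to establish the two implications separately, using three facts already available: the identity $\Um_n(X_B)=\Gen_n({}_AX)$ from the preceding proposition; the coordinate description of unimodular tuples in Remark \ref{rk1}, namely that $(v_1,\ldots,v_n)\in\Um_n(X)$ exactly when $\sum_k\p{w_k}{v_k}_R=1$ for some $w_1,\ldots,w_n\in X$; and the adjointability of the left action, which yields $\p{y}{a\cdot x}_R=\p{a^*\cdot y}{x}_R$ for all $a\in A$ and $x,y\in X$ (the left action being a $*$-representation into $\mathcal L(X_B)$).

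For the implication (b) $\Rightarrow$ (a) I would use the very elements $y_1,\ldots,y_n$ as the dual coordinates meant to witness unimodularity of the reduced tuple. Writing $x_k'=x_k+a_k\cdot x_{n+1}$ and applying the adjointability identity gives
\[
  \sum_{k=1}^n\p{y_k}{x_k'}_R
  =\sum_{k=1}^n\p{y_k}{x_k}_R+\Bigl\langle{\textstyle\sum_{k=1}^n a_k^*\cdot y_k},\,x_{n+1}\Bigr\rangle_R .
\]
Comparing with the hypothesis $\sum_{k=1}^{n+1}\p{y_k}{x_k}_R=1$, the right-hand side equals $1$ as soon as $\sum_{k=1}^n a_k^*\cdot y_k=y_{n+1}$. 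The crux is that such $a_k$ exist: since $(y_1,\ldots,y_n)\in\Um_n(X)=\Gen_n({}_AX)$, this tuple generates ${}_AX$, so $y_{n+1}=\sum_{k=1}^n b_k\cdot y_k$ for some $b_k\in A$, and one sets $a_k=b_k^*$. Then $\sum_{k=1}^n\p{y_k}{x_k'}_R=1$, so by Remark \ref{rk1} the tuple $(x_1',\ldots,x_n')$ is unimodular, which is condition (a).

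For (a) $\Rightarrow$ (b) I would start from $a_1,\ldots,a_n$ with $(x_1',\ldots,x_n')\in\Um_n(X)$, choose by Remark \ref{rk1} elements $z_1,\ldots,z_n$ with $\sum_{k=1}^n\p{z_k}{x_k'}_R=1$, and expand exactly as above to obtain $\sum_{k=1}^n\p{z_k}{x_k}_R+\p{\sum_{k=1}^n a_k^*\cdot z_k}{x_{n+1}}_R=1$. Setting $y_k=z_k$ for $k\leq n$ and $y_{n+1}=\sum_{k=1}^n a_k^*\cdot z_k$ then yields $\sum_{k=1}^{n+1}\p{y_k}{x_k}_R=1$. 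It remains to see that $(y_1,\ldots,y_n)=(z_1,\ldots,z_n)\in\Um_n(X)$: taking adjoints in $\sum_k\p{z_k}{x_k'}_R=1$ gives $\sum_k\p{x_k'}{z_k}_R=1$, so the $x_k'$ serve as dual coordinates for the $z_k$ in the sense of Remark \ref{rk1}, establishing the claim.

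I expect the only genuine obstacle to be the nontrivial direction (b) $\Rightarrow$ (a): the decisive observation is that the left $A$-module coefficients expressing $y_{n+1}$ through the generators $y_1,\ldots,y_n$ — available precisely because of the identification $\Um_n=\Gen_n$ — furnish, after applying the involution, the reducing elements $a_k$. Beyond this, the argument is just careful bookkeeping of the involution in the adjointability identity, the reverse implication following by the same computation read backwards together with one adjoint to recover unimodularity of $(y_1,\ldots,y_n)$.
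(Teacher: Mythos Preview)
Your proof is correct and follows essentially the same line as the paper's. The one packaging difference is in (b)$\Rightarrow$(a): the paper explicitly sets $a_k=\p{z_k}{y_{n+1}}_L$ (after choosing $z_k$ with $\sum_k\p{y_k}{z_k}_R=1$) and verifies $\sum_k a_k^*\cdot y_k=y_{n+1}$ via the bimodule compatibility $\p uv_L\cdot w=u\cdot\p vw_R$, whereas you invoke the already-proved identity $\Um_n=\Gen_n$ to obtain coefficients $b_k$ with $y_{n+1}=\sum_k b_k\cdot y_k$ and put $a_k=b_k^*$. Since the proof of $\Um_n\subseteq\Gen_n$ uses precisely that compatibility identity, the two arguments coincide once unpacked; your version simply black-boxes that step, while the paper's gives the explicit formula for the reducing elements.
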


\begin{proof}
If $(x_1+a_1\cdot x_{n+1},\ldots,x_n+a_n\cdot x_{n+1})\in\Um_n(X)$, there exist $y_1,\ldots,y_n\in X$ such that $\SUM_{k=1}^n\p{y_k}{x_k+a_k\cdot x_{n+1}}_R=1$. Then we have $(y_1,\ldots,y_n)\in\Um_n(X)$ by \ref{lema1}, and
   $$\Sum_{k=1}^n\p{y_k}{x_k}_R+\p{\Sum_{k=1}^na_k^*\cdot y_k}{x_{n+1}}_R=1.$$
Taking $y_{n+1}=\SUM_{k=1}^na_k^*\cdot y_k$ we obtain (b).

Conversely, if condition (b) holds, as $(y_1,\ldots,y_n)\in\Um_n(X)$ there exist $z_1,\ldots,z_n\in X$ such that $\SUM_{k=1}^n\p{y_k}{z_k}_R=1$. Let $a_k=\p{z_k}{y_{n+1}}_L$, for $k=1,\ldots,n$. Then
\begin{equation}\label{eqn1}
   \SUM_{k=1}^n\p{y_k}{x_k+a_k\cdot x_{n+1}}_R
   =\SUM_{k=1}^n\p{y_k}{x_k}_R+\p{\SUM_{k=1}^n a_k^*\cdot y_k}{x_{n+1}}_R.
\end{equation}
Now, we have
\begin{eqnarray*}
   \SUM_{k=1}^n a_k^*\cdot y_k
   =\SUM_{k=1}^n \p{y_{n+1}}{z_k}_L\cdot y_k
   =\SUM_{k=1}^ny_{n+1}\cdot\p{z_k}{y_k}_R\\
   =y_{n+1}\cdot\SUM_{k=1}^n\p{z_k}{y_k}_R
   =y_{n+1}\cdot1=y_{n+1}.
\end{eqnarray*}
Then, the right-hand side of equation \ref{eqn1} equals 1 by (b), and (a) holds.
\end{proof}

The proof of the following proposition is analogous to that of \cite[Theorem 2.3]{Rie}.

\begin{proposition}(\cite[Proposition 9.7]{Rie})\label{Bsrleqtsr}
Let $_AX_B$ be a right-full and right-unital $C^*$-bimodule. Then
   $$\Bsr(X)\leq\tsr(X).$$
\end{proposition}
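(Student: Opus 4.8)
The plan is to assume $\tsr(X)=n<\infty$ (the inequality being vacuous otherwise) and to prove that every $(n+1)$-generator of $_AX$ is reducible; by definition this gives $\Bsr(X)\le n=\tsr(X)$. The engine of the argument is the Warfield Condition lemma combined with the identification $\Gen_k(_AX)=\Um_k(X_B)$ supplied by the preceding proposition. Under this dictionary, reducibility of $(x_1,\dots,x_{n+1})$ is precisely condition (a) of the Warfield lemma, so it suffices to produce $y_1,\dots,y_{n+1}\in X$ satisfying condition (b), namely $\sum_{k=1}^{n+1}\p{y_k}{x_k}_R=1$ together with $(y_1,\dots,y_n)\in\Um_n(X)$.

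First I would fix $(x_1,\dots,x_{n+1})\in\Gen_{n+1}(X)=\Um_{n+1}(X)$. Applying Lemma \ref{lema1}(b) to $X^{n+1}$ (as in Remark \ref{rk1}) yields $y_1,\dots,y_{n+1}\in X$ with $\sum_{k=1}^{n+1}\p{y_k}{x_k}_R=1$. The truncated tuple $(y_1,\dots,y_n)$ need not be unimodular, and this is exactly where topological stable rank enters: since $\tsr(X)=n$, the set $\Um_n(X)=\Gen_n(X)$ is dense in $X^n$, so I can pick $(y_1',\dots,y_n')\in\Um_n(X)$ with each $\|y_k'-y_k\|$ as small as desired, while keeping $y_{n+1}'=y_{n+1}$.

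Because $y_{n+1}$ is unchanged, the $k=n+1$ terms cancel and $b-1=\sum_{k=1}^n\p{y_k'-y_k}{x_k}_R$, where $b:=\sum_{k=1}^{n+1}\p{y_k'}{x_k}_R$. The Cauchy--Schwarz estimate $\|\p{y_k'-y_k}{x_k}_R\|\le\|y_k'-y_k\|\,\|x_k\|$ then forces $\|b-1\|<1$ for a sufficiently close approximation, so $b\in\GL(B)$. To repair the normalization I would rescale on the right by $c=(b^{-1})^*$, setting $y_k''=y_k'\cdot c$ for $k=1,\dots,n+1$. Conjugate-linearity of the inner product in its first slot gives $\p{y_k''}{x_k}_R=b^{-1}\p{y_k'}{x_k}_R$, whence $\sum_{k=1}^{n+1}\p{y_k''}{x_k}_R=b^{-1}b=1$; and $\sum_{k=1}^n\p{y_k''}{y_k''}_R=c^*\bigl(\sum_{k=1}^n\p{y_k'}{y_k'}_R\bigr)c$ is a product of invertibles, so $(y_1'',\dots,y_n'')$ remains in $\Um_n(X)$. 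These $y_k''$ witness condition (b), and the Warfield lemma delivers (a), i.e.\ the reducibility of $(x_1,\dots,x_{n+1})$.

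The step I expect to be the main obstacle is this normalization: density only produces a unimodular tuple \emph{near} $(y_1,\dots,y_n)$, and the real work is converting the resulting approximate identity $b\approx 1$ back into the exact relation $\sum_{k}\p{y_k''}{x_k}_R=1$ without destroying unimodularity of the first $n$ entries. The single rescaling by $c=(b^{-1})^*$ accomplishes both at once precisely because the right $B$-action interacts correctly with each inner product; checking these two compatibilities together with the norm bound is the only genuinely technical content, everything else being the translation furnished by the earlier proposition and the Warfield condition.
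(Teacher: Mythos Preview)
Your argument is correct and is essentially the paper's own proof: the paper likewise starts from $z_1,\dots,z_{n+1}$ with $\sum_k\p{z_k}{x_k}_R=1$, perturbs the first $n$ entries into $\Um_n(X)$ so that the resulting sum $d^*$ is invertible, and then right-multiplies by $d^{-1}$ (your $c=(b^{-1})^*$) to restore the exact relation while preserving unimodularity, before invoking the Warfield condition. The only differences are notational, and your write-up is in fact a bit more explicit about the Cauchy--Schwarz bound and the check that rescaling preserves $\Um_n(X)$.
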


\begin{proof}
Let $n=\tsr(X)=\sr(X)$. Given $(x_1,\ldots,x_{n+1})\in\Gen_{n+1}(_AX)=\Um_{n+1}(X_B)$ consider $z_1,\ldots,z_{n+1}\in X$ such that $\SUM_{k=1}^{n+1}\p{z_k}{x_k}_R=1$. For $k=1,\ldots,n$, pick perturbations $\bar z_k\simeq z_k$ with $(\bar z_1\ldots,\bar z_n)\in\Um_n(X)$ so that
    $$d^*:=\p{\bar z_1}{x_1}_R+\cdots+\p{\bar z_n}{x_n}_R+\p{z_{n+1}}{x_{n+1}}_R\in\GL(B).$$
Then, taking $y_1=\bar z_1\cdot d^{-1},\ldots,y_n=\bar z_n\cdot d^{-1}, y_{n+1}=z_{n+1}\cdot d^{-1}$ we have $(y_1,\ldots,y_n)=(\bar z_1,\ldots,\bar z_n)\cdot d^{-1}\in\Um_n(X)$ and $\SUM_{k=1}^{n+1}\p{y_k}{x_k}_R=1$. By the previous lemma $(x_1,\ldots,x_{n+1})$ is reducible, and then $\Bsr(X)\leq n$.
\end{proof}

\section{Herman-Vaserstien theorem for \cstar-modules}
Herman-Vaserstein theorem states that for a unital \cstar-algebra $A$, $\tsr(A)\leq\Bsr(A)$. In this section we obtain $\tsr(X)\leq\Bsr(X)$ for a right-full and right-unital Hilbert bimodule $X$.

\begin{lemma}\label{lema_HV}
Let $X_B$ be a full and unital Hilbert module. Given $x_1,\ldots,x_n$, $u_1,\ldots,u_r\in X$ such that $\Sum_k\p{u_k}{u_k}=1$, and $\varepsilon>0$, let $b_0=\sum_i\p{x_i}{x_i}$, $b=(1-\frac{b_0}{\varepsilon})^+$ and $y_k=u_k\cdot b$, for $k=1,\ldots,r$. Then $(x_1,\ldots,x_n,y_1,\ldots,y_r)\in\Um_{n+r}(X_B)$.
\end{lemma}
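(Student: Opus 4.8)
The plan is to unwind the definition of $\Um_{n+r}(X)$ and reduce the whole statement to the invertibility of a single explicit element of $B$. By definition, $(x_1,\ldots,x_n,y_1,\ldots,y_r)\in\Um_{n+r}(X)$ precisely when
$$S:=\Sum_{i=1}^n\p{x_i}{x_i}+\Sum_{k=1}^r\p{y_k}{y_k}\in\GL(B),$$
so the argument comes down to computing $S$ and then proving $S\in\GL(B)$.

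First I would compute $S$ using the sesquilinearity of the $B$-valued inner product. The first sum is $b_0$ by definition of $b_0$. For the second, note that $b=(1-b_0/\varepsilon)^+$ is self-adjoint (it is a positive part, hence positive), and $y_k=u_k\cdot b$, so using $\p{x\cdot c}{z}=c^*\p{x}{z}$ and $\p{x}{z\cdot c}=\p{x}{z}c$ I get
$$\Sum_{k=1}^r\p{y_k}{y_k}=\Sum_{k=1}^r b^*\p{u_k}{u_k}b=b^*\Big(\Sum_{k=1}^r\p{u_k}{u_k}\Big)b=b^*b=b^2,$$
where the hypothesis $\Sum_k\p{u_k}{u_k}=1$ is used in the penultimate equality. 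Hence $S=b_0+b^2$. Since both $b_0$ and $b$ lie in the commutative $C^*$-subalgebra generated by $b_0$ and $1$, the continuous functional calculus lets me write $S=f(b_0)$ for the continuous function $f\colon[0,\infty)\to\mathbb{R}$ given by $f(t)=t+\big((1-t/\varepsilon)^+\big)^2$.

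The remaining step—and the only one requiring a genuine, if elementary, argument—is to show that $f$ is strictly positive on the spectrum of $b_0$. I would split into two cases. If $t\geq\varepsilon$, then $(1-t/\varepsilon)^+=0$ and $f(t)=t\geq\varepsilon>0$. If $0\leq t<\varepsilon$, then $(1-t/\varepsilon)^+=1-t/\varepsilon>0$, so $f(t)=t+(1-t/\varepsilon)^2$ is a sum of a nonnegative and a strictly positive term, whence $f(t)>0$. Thus $f>0$ on all of $[0,\infty)$; since $b_0\geq0$ its spectrum $\sigma(b_0)\subseteq[0,\|b_0\|]$ is compact, so $f$ attains a strictly positive minimum $\delta$ there. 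By the spectral mapping theorem $\sigma(S)=f(\sigma(b_0))\subseteq[\delta,\infty)$, so $0\notin\sigma(S)$ and $S\in\GL(B)$, which is exactly the required conclusion.

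I do not anticipate a serious obstacle. The fullness and unitality hypotheses serve only to guarantee that $1\in B$ and that tuples with $\Sum_k\p{u_k}{u_k}=1$ exist, both already in force here, so they are not invoked again in the proof itself. The genuinely substantive content is the reduction $S=b_0+b^2=f(b_0)$ followed by the short positivity check on $f$. The one point demanding care is the bookkeeping of the inner-product convention, so that the factor $b$ emerges symmetrically as $b^*b=b^2$; once that is handled, the invertibility of $S$ is immediate from functional calculus.
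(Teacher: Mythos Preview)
Your proof is correct and follows essentially the same route as the paper: both compute $\sum_k\p{y_k}{y_k}=b^*b=b^2$, reduce the claim to the invertibility of $b_0+b^2$ inside the commutative $C^*$-subalgebra $C^*(1,b_0)$, and then observe via functional calculus that $b_0$ and $b^2$ have no common zeros (equivalently, your function $f$ is strictly positive). Your case analysis for $f>0$ is just an explicit spelling-out of the paper's one-line remark that ``$c$ and $b_0$ do not have common roots.''
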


\begin{proof}
Let $x=(x_1,\ldots,x_n)$, $u=(u_1,\ldots,u_r)$ and $y=(y_1,\ldots,y_r)$, then $y=u\cdot b$ and $b_0=\p xx$. Consider the commutative $C^*$-subalgebra $B_0:=C^*(1,b_0)\subseteq B$. Let $c\in B$ the element given by
   $$c=\p yy=\p{u\cdot b}{u\cdot b}=b^*\p uu b =b^*b=[(1-\frac{b_0}{\varepsilon})^+]^2.$$
Consequently $c$ and $b_0$ belongs to $B_0^+$ and do not have common roots. Therefore
   $$\p{(x,y)}{(x,y)}=\p xx + \p yy=b_0+c\in\GL(B_0)\subseteq{\GL(B)}.$$
That is, $(x,y)=(x_1,\ldots,x_n,y_1,\ldots,y_r)\in\Um_{n+r}(X_B)$.
\end{proof}

\begin{theorem}\label{HV}
Let $_AX_B$ be a right-full and right-unital $C^*$-bimodule. Then
   $$\Bsr(X)=\tsr(X).$$
\end{theorem}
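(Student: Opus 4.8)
The inequality $\Bsr(X)\le\tsr(X)$ is exactly Proposition \ref{Bsrleqtsr}, so the entire content lies in the reverse inequality $\tsr(X)\le\Bsr(X)$. Since $\tsr(X)=\sr(X)$ by the final proposition of Section 3, it suffices to prove that, for $n:=\Bsr(X)$, the set $\Um_n(X)$ is dense in $X^n$; this yields $\sr(X)\le n$ and hence $\tsr(X)\le\Bsr(X)$. Accordingly, the plan is to fix an arbitrary tuple $(x_1,\dots,x_n)\in X^n$ together with $\varepsilon>0$, and to manufacture a unimodular tuple within $\varepsilon$ of it.

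First I would complete $(x_1,\dots,x_n)$ to a unimodular tuple by appending a controlled correction. As $X$ is full and unital, Lemma \ref{full} furnishes $u_1,\dots,u_r\in X$ with $\sum_k\p{u_k}{u_k}=1$. Setting $b_0=\sum_i\p{x_i}{x_i}$, $b=(1-b_0/\varepsilon)^+$ and $y_k=u_k\cdot b$, Lemma \ref{lema_HV} gives $(x_1,\dots,x_n,y_1,\dots,y_r)\in\Um_{n+r}(X)$, which by the last proposition of Section 3 equals $\Gen_{n+r}(X)$. Now I would invoke $n=\Bsr(X)$: using the standard stability of the Bass condition (Vaserstein), every $(m+1)$-generator with $m\ge n$ is reducible, so I can reduce the coordinates $y_r,y_{r-1},\dots,y_1$ one at a time. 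Each reduction adds a left-multiple of the eliminated coordinate to the surviving ones; since every $y_k$, and every coordinate produced from it, has the form $(\,\cdot\,)\cdot b$ (because $c\cdot(u\cdot b)=(c\cdot u)\cdot b$), the net effect on the $x$-part is $x_i\rightsquigarrow x_i'=x_i+z_i\cdot b$ for suitable $z_i\in X$, and the resulting $n$-tuple lies in $\Gen_n(X)=\Um_n(X)$.

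The crux is to perform these reductions so that the total perturbation $\|z_i\cdot b\|$ is $O(\sqrt\varepsilon)$; the bare Bass condition only guarantees the existence of reducing coefficients, with no norm bound, so this quantitative reduction---the technical heart of the Herman--Vaserstein argument---is precisely where the $C^*$-structure must enter. The governing observations are: (i) $b$ vanishes on the spectral region $\{b_0\ge\varepsilon\}$, so $x_i'=x_i$ there while the tuple is already bounded below by $\varepsilon$; and (ii) on $\{b_0<\varepsilon\}$, where the correction is supported, the tuple is itself small, as quantified by $\|x_i\cdot b\|^2=\|b\,\p{x_i}{x_i}\,b\|\le\|b\,b_0\,b\|=\|b_0b^2\|\le\tfrac4{27}\varepsilon$. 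Hence one only needs the perturbed tuple to be bounded below by an arbitrarily small constant on that region, so the reductions can be realized with reducing coefficients small enough that $\|z_i\cdot b\|=O(\sqrt\varepsilon)$---exactly as in the scalar model, where a nonvanishing function is produced from $a$ by a nudge of size $O(\sqrt\varepsilon)$ on the set where $|a|$ is already $O(\sqrt\varepsilon)$.

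Granting the controlled reduction, $(x_1',\dots,x_n')\in\Um_n(X)$ satisfies $\|x_i'-x_i\|=O(\sqrt\varepsilon)$; letting $\varepsilon\to0$ shows $\Um_n(X)$ is dense in $X^n$, whence $\tsr(X)=\sr(X)\le n=\Bsr(X)$ and, combined with Proposition \ref{Bsrleqtsr}, $\Bsr(X)=\tsr(X)$. I expect the quantitative reduction of the previous paragraph to be the sole genuine difficulty; the appending step and the identifications $\Um=\Gen$ are immediate from the lemmas already in place.
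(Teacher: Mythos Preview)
Your setup through the appending step and the $r$-fold reduction is correct and matches the paper exactly. The gap is precisely where you flag it: you do not carry out the ``controlled reduction,'' and in fact the paper does \emph{not} proceed by making the reducing coefficients small. The Bass condition hands you some $a\in M_{n\times r}(A)$ with $x+a\cdot y\in\Um_n(X)$, and $\|a\|$ may be arbitrarily large; nothing in the argument so far lets you re-choose $a$ to be small, and your heuristic about only needing a tiny lower bound on $\{b_0<\varepsilon\}$ does not by itself produce such an $a$.

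The paper's resolution is a scaling trick that sidesteps the issue entirely. Given whatever $a$ the Bass property produces, choose $k>\|a\|/\varepsilon$, set $d=1+kb\in\GL(B)^+$, and put
\[
x'=(x+a\cdot y)\cdot d^{-1}.
\]
Right multiplication by an invertible element of $B$ preserves $\Um_n(X)$, so $x'\in\Um_n(X)=\Gen_n(X)$. Then $x-x'=(x\cdot kb-a\cdot y)\cdot d^{-1}$, and both pieces are small: working in the commutative subalgebra $C^*(1,b_0)$ one has $0\le kbd^{-1}\le 1$ and $b=0$ on $\{b_0>\varepsilon\}$, which forces $(kbd^{-1})b_0(kbd^{-1})\le\varepsilon$ and hence $\|x\cdot kbd^{-1}\|\le\sqrt\varepsilon$; while $a\cdot y\cdot d^{-1}=(a/k)\cdot u\cdot(kbd^{-1})$ has norm at most $(\|a\|/k)\|u\|\,\|kbd^{-1}\|<\varepsilon$. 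Thus $\|x-x'\|<\sqrt\varepsilon+\varepsilon$. The whole point is that the possibly huge $\|a\|$ is absorbed by choosing $k$ \emph{after} $a$, not by constraining $a$; this is the step your outline is missing.
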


\begin{proof}
By proposition \ref{Bsrleqtsr} it suffices to show $\Bsr(X)\geq\tsr(X)$. Suppose $\Bsr(_AX)=n$ and let $x=(x_1,\ldots,x_n)\in X^n$, $\varepsilon>0$ be given. As $X$ is right-full and right-unital, by \ref{full} there exists $u=(u_1,\ldots,u_r)\in\Um_r(X)$ for suitable $r\in\N$. Replacing $u$ with $u\cdot\p uu_R^{-1/2}$, we may suppose $\p uu_R=1$. Taking $b_0$, $b$ and $y$ as in lemma \ref{lema_HV} we have that $(x,y)\in\Um_{n+r}(X_B)=\Gen_{n+r}(_AX)$. Then, as $\Bsr(_AX)=n$, the generator $(x,y)$ can be reduced $r$ times to an $n$-generator. Therefore, there exists $a\in M_{n\times r}(A)$ such that $x+a\cdot y\in\Gen_n(_AX)=\Um_n(X_B)$. Let
  $$k>\frac{\|a\|}{\varepsilon},\quad d=1+kb\in B^+\cap\GL(B)\quad\text{and}$$
  $$\quad x'=(x+a\cdot y)\cdot d^{-1}\in\Um_n(X)=\Gen_n(X),$$
where $\|a\|$ is the norm of $a$ as a $B$-adjuntable operator $a\colon X^r\to X^n$.

We have $x-x'=(x\cdot d-x-a\cdot y)\cdot d^{-1}=(x\cdot kb-a\cdot y)\cdot d^{-1}$ and
\begin{equation}\label{1}
  \|x-x'\|\leq\|x\cdot kbd^{-1}\|+\|a\cdot y\cdot d^{-1}\|.
\end{equation}
As $b_0=\p xx_R$, we have
   $$|x\cdot kbd^{-1}|_R^2    =(kbd^{-1})^*\p{x}{x}_R(kbd^{-1})=(kbd^{-1})^*b_0(kbd^{-1}).$$
Now, as $b=(1-\frac{b_0}{\varepsilon})^+$, we have $d=1+kb\in C^*(1,b_0)\cong C(T)$ which is commutative. Therefore $kbd^{-1}=kb(1+kb)^{-1}=b(\frac1k+b)^{-1}\leq 1$ in $C(T)$ and consequently $(kbd^{-1})^*b_0(kbd^{-1})\leq b_0$. Moreover, if $b_0(t)>\varepsilon$ for suitable $t\in T$, then $b(t)=0$, because $b=(1-\frac{b_0}{\varepsilon})^+$. Hence $(kbd^{-1})^*b_0(kbd^{-1})\leq\varepsilon$ and
\begin{equation}\label{bound1}
   \|x\cdot kbd^{-1}\|=\||x\cdot kbd^{-1}|_R^2\|^{1/2}\leq\sqrt{\varepsilon}.
\end{equation}

On the other hand, since $y=u\cdot b$ we have
\begin{equation}\label{bound2}
   \|a\cdot y\cdot d^{-1}\|
   =\|\frac ak\cdot u\cdot kbd^{-1}\|
   \leq\frac{\|a\|}k\|u\|\|kbd^{-1}\|
   <\varepsilon,
\end{equation}
where we have used that $\|a\|/k<\varepsilon$, $\|u\|=1$ and $\|kbd^{-1}\|\leq1$.

Thus we can estimate \ref{1} using equations \ref{bound1} and \ref{bound2} to get
   $$\|x-x'\|<\sqrt{\varepsilon}+\varepsilon.$$
Then, $x'$ can be taken arbitarlily close to $x$ and $x'\in\Gen_n(X)$. Therefore, $\Gen_n(X)$ is dense and $\tsr(X)\leq n$.
\end{proof}

\begin{remark} If $_AX$ is a finitely generated projective module over a unital \cstar-algebra $A$ we can make it into a right-full and right-unital $C^*$-bimodule in the following way. The module $_AX$ is a direct summand of $A^n$ for suitable $n\in\N$, and is therefore the range of a (selfadjoint) projection $p\in M_n(A)$. Then we have $_AX$ as the submodule $_A(A^np)$ of $_AA^n$. As we actually have a Hilbert $A-M_n(A)$ bimodule structrue on $A^n$ (thinking of $A^n$ as a row space and using the usual matrix operations) we obtain, by restriction, an $A-pM_n(A)p$ \cstar-bimodule $_A(A^np){_{pM_n(A)p}}$, which is right-full and right-unital.
\par Combining this construction with theorem \ref{HV} we have that $\Bsr(X)=\tsr(X)$ for every finitely generated projective left module over a unital \cstar-algebra.
\end{remark}

{\bf Acknowledgement.} The author wishes to thank his friend Janine Bachrachas for her help editing this article.

\bibliographystyle{amsplain}

\begin{thebibliography}{99}

\bibitem{AG} P. Ara and K.R. Goodearl, [2003], {\em Stable rank of corner rings}, Proc. Amer. Math. Soc. {\bf 133}(2) (2005), 370-386.

\bibitem{Bla} B. Blackadar, \textit{The stable rank of full corners in \cstar-algebras}, Proc. AMS, {\bf 312} (2004), 2945-2950.

\bibitem{HV} R.H. Herman and L.N. Vaserstein, \textit{The stable range of \cstar-algebras}, Invent. Math. {\bf 77} (1984), 553-555.

\bibitem{Rie} M.A. Rieffel, \textit{Dimension and stable rank in the K-theory of \cstar-algebras}, Proc. London Math. Soc. {\bf 46} (1983), 301-333.

\end{thebibliography}

\end{document}